\newtheorem{theorem}{Theorem}[section]
\newtheorem{lemma}[theorem]{Lemma}
\theoremstyle{definition}
\theoremstyle{remark}
\newtheorem{remark}[theorem]{Remark}
\numberwithin{equation}{section}
\begin{document}

\title{On biharmonic hypersurfaces with constant scalar curvatures in $\mathbb E^5(c)$}

\author{Yu Fu}
\address{School of Mathematics and
Quantitative Economics, Dongbei University of Finance and Economics,
Dalian 116025, P. R. China}
\email{yufudufe@gmail.com}
\thanks{The author was supported by the
Mathematical Tianyuan Youth Fund of China (No. 11326068), Project
funded by China Postdoctoral Science Foundation (No. 2014M560216).}


\subjclass[2000]{Primary 53D12, 53C40; Secondary 53C42}

\date{february 21, 2014 and, in revised form, September 18, 2014.}


\keywords{Biharmonic maps, Biharmonic submanifolds, Chen's
conjecture, Generalized Chen's conjecture}

\begin{abstract}
We prove that proper biharmonic hypersurfaces with constant scalar
curvature in Euclidean sphere $\mathbb S^5$ must have constant mean
curvature. Moreover, we also show that there exist no proper
biharmonic hypersurfaces with constant scalar curvature in Euclidean
space $\mathbb E^5$ or hyperbolic space $\mathbb H^5$, which give
affirmative partial answers to Chen's conjecture and Generalized
Chen's conjecture.
\end{abstract}

\maketitle
\section{Introduction}
\hspace*{\parindent}
Biharmonic maps
$\phi:(M^n,g)\longrightarrow(\bar{M}^m,\langle,\rangle)$ between
Riemannian manifolds are critical points of the bienergy functional
\begin{eqnarray*}
E_2(\phi)=\frac{1}{2}\int_M|\tau(\phi)|^2v_g,
\end{eqnarray*}
where $\tau(\phi)= {\rm trace \nabla d\phi}$ is the tension field of
$\phi$ that vanishes for harmonic maps. The Euler-Lagrange equation
associated to the bienergy, which characterizes biharmonic maps, is
given by the vanishing of the bitension field
\begin{eqnarray*}
\tau_2(\phi)=-\Delta\tau(\phi)-{\rm trace}\,
R^{\bar{M}}(d\phi,\tau(\phi))d\phi=0,
\end{eqnarray*}
where $R^{\bar{M}}$ is the curvature tensor of $\bar{M}^m$. The
above equation shows that $\phi$ is a biharmonic map if and only if
its bitension field $\tau_2(\phi)$ vanishes. Equivalently, for an
immersion $\phi:(M^n,g)\longrightarrow(\bar{M}^m,\langle,\rangle)$
between Riemannian manifolds, the mean curvature vector field
$\overrightarrow{H}$ satisfies the following fourth order elliptic
semi-linear PDE
\begin{eqnarray}
\Delta\overrightarrow{H}+{\rm trace}\,
R^{\bar{M}}(d\phi,\overrightarrow{H})d\phi=0.
\end{eqnarray}
In view of (1.1), any minimal immersion, i.e. immersion satisfying
$\overrightarrow{H}=0$, is biharmonic. The non-harmonic biharmonic
immersions are called proper biharmonic.

In a different setting, B. Y. Chen in the middle of 1980s initiated
the study of biharmonic submanifolds in a Euclidean space by the
condition $\Delta \overrightarrow{H}=0$, where $\Delta$ is the rough
Laplacian operator of submanifolds with respect to the induced
metric. Both notions of biharmonic submanifolds in Euclidean spaces
coincide with each other.

Nowadays, the study of biharmonic submanifolds is becoming a very
active subject. There is a challenging biharmonic conjecture of B.
Y. Chen made in 1991 \cite{Chen1991}:

{\bf Chen's conjecture}: {\em The only biharmonic submanifolds of
Euclidean spaces are the minimal ones}.

Due to some non-existence results, Caddeo, Montaldo and Oniciuc
\cite{CMO2001} made in 2001 the following generalized Chen's
conjecture:

{\bf Generalized Chen's conjecture}: {\em Every biharmonic
submanifold of a Riemannian manifold with non-positive sectional
curvature is minimal}.

Up to now, Chen's conjecture is still open. Recently, Generalized
Chen's conjecture was proved to be wrong by Y. L. Ou and L. Tang in
\cite{Ou2012}, who constructed examples of proper-biharmonic
hypersurfaces in a 5-dimensional space of non-constant negative
sectional curvature. However, Generalized Chen's conjecture is still
open in its full generality for ambient spaces with constant
sectional curvature. For more recent developments of Chen's
conjecture and Generalized Chen's conjecture, for instance, see
[1-3, 10-18].

In contrast, the class of proper biharmonic submanifolds in
Euclidean spheres is rather rich and quite interesting. The complete
classifications of biharmonic hypersurfaces in $\mathbb S^3$ and
$\mathbb S^4$ were obtained in \cite{CMO2001, BMO20102}. Moreover,
the authors in \cite{BMO2008} classified biharmonic hypersurfaces
with at most two distinct principal curvatures in $\mathbb S^n$ with
arbitrary dimension. Very recently, biharmonic hypersurfaces with
three distinct principal curvatures in $\mathbb S^n$ were classified
by the author in \cite{fuyu}.

In the present paper, we prove that a biharmonic hypersurface with
constant scalar curvature in the space forms $\mathbb E^{5}(c)$
necessarily has constant mean curvature. As an application of this
result, we show that biharmonic hypersurfaces with constant scalar
curvature in Euclidean space $\mathbb E^5$ and hyperbolic space
$\mathbb H^5$ have to be minimal. Hence, these results give
affirmative partial answers to Chen's conjecture and Generalized
Chen's conjecture.

\section{Preliminaries}
Let $x: M^n\rightarrow\mathbb{E}^{n+1}(c)$ be an isometric immersion
of a hypersurface $M^n$ into a space form $\mathbb{E}^{n+1}(c)$ with
constant sectional curvature $c$. Denote the Levi-Civita connections
of $M^n$ and $\mathbb{E}^{n+1}(c)$ by $\nabla$ and $\tilde\nabla$,
respectively. Let $X$ and $Y$ denote vector fields tangent to $M^n$
and let $\xi$ be a unit normal vector field. Then the Gauss and
Weingarten formulas (cf. \cite{chenbook2011, chenbook1984}) are
given, respectively, by
\begin{eqnarray}
\tilde\nabla_XY&=&\nabla_XY+h(X,Y),\label{l23}\\
\tilde\nabla_X\xi&=&-AX,\label{l16}
\end{eqnarray}
where $h$ is the second fundamental form, and $A$ is the Weingarten
operator. It is well known that the second fundamental form $h$ and
the Weingarten operator $A$ are related by
\begin{eqnarray}\label{l3}
\langle h(X,Y),\xi\rangle=\langle AX,Y\rangle.
\end{eqnarray}
The mean curvature vector field $\overrightarrow{H}$ is given by
\begin{eqnarray}
\overrightarrow{H}=\frac{1}{n}{\rm trace}~h.
\end{eqnarray}
Moreover, the Gauss and Codazzi equations are given, respectively,
by
\begin{eqnarray*}
R(X,Y)Z=c(\langle Y, Z\rangle X-\langle X,Z\rangle Y)+\langle
AY,Z\rangle AX-\langle AX,Z\rangle AY,
\end{eqnarray*}
\begin{eqnarray*}
(\nabla_{X} A)Y=(\nabla_{Y} A)X,
\end{eqnarray*}
where $R$ is the curvature tensor of the hypersurface $M^n$ and
$(\nabla_XA)Y$ is defined by
\begin{eqnarray}\label{l7}
(\nabla_XA)Y=\nabla_X(AY)-A(\nabla_XY)
\end{eqnarray}
for all $X, Y, Z$ tangent to $M^n$.

Assume that $\overrightarrow{H}=H\xi$ and $H$ denotes the mean
curvature.

By identifying the tangent and the normal parts of the biharmonic
condition (1.1) for hypersurfaces in a space form $\mathbb
E^{n+1}(c)$, we obtain the following characterization result for
$M^n$ to be biharmonic (see also \cite{CMO2002, BMO20102}).
\begin{theorem}
The immersion $x: M^n\rightarrow\mathbb{E}^{n+1}(c)$ of a
hypersurface $M^n$ in an $n+1$-dimensional space form $\mathbb
E^{n+1}(c)$ is biharmonic if and only if
\begin{equation}
\begin{cases}
\Delta H+H {\rm trace}\, A^2 =ncH,\\
2A\,{\rm grad}H+n\, H{\rm grad}H=0.
\end{cases}
\end{equation}
\end{theorem}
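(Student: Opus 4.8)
The plan is to specialize the ambient biharmonic equation (1.1) to hypersurfaces by substituting $\overrightarrow{H} = H\xi$ and splitting both of its terms into the components tangent and normal to $M^n$. I would fix a local orthonormal frame $\{e_1,\dots,e_n\}$ that is geodesic at the point of evaluation, so that $\nabla_{e_i}e_j = 0$ there; this removes the Christoffel contributions and lets me write the rough Laplacian as $\Delta = -\sum_i\tilde\nabla_{e_i}\tilde\nabla_{e_i}$ at that point.

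First I would compute $\Delta\overrightarrow{H}$. Applying the Weingarten formula \eqref{l16} gives $\tilde\nabla_{e_i}(H\xi) = (e_i H)\xi - HAe_i$; differentiating once more and using the Gauss formula \eqref{l23} to peel off the normal part via $\tilde\nabla_{e_j}(Ae_i) = \nabla_{e_j}(Ae_i) + \langle Ae_i, Ae_j\rangle\xi$, then tracing, I obtain
\begin{equation*}
\Delta\overrightarrow{H} = \big(\Delta H + H\,\mathrm{trace}\,A^2\big)\xi + 2A\,\mathrm{grad}\,H + H\sum_i(\nabla_{e_i}A)e_i .
\end{equation*}
The decisive simplification of the last sum uses the Codazzi equation $(\nabla_XA)Y = (\nabla_YA)X$ together with the self-adjointness of $A$: a short computation shows $\sum_i(\nabla_{e_i}A)e_i = \mathrm{grad}(\mathrm{trace}\,A) = n\,\mathrm{grad}\,H$, so the tangential part of $\Delta\overrightarrow{H}$ collapses to $2A\,\mathrm{grad}\,H + nH\,\mathrm{grad}\,H$.

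For the curvature term I would use that $\mathbb E^{n+1}(c)$ has constant sectional curvature, so $R^{\bar{M}}(X,Y)Z = c(\langle Y,Z\rangle X - \langle X,Z\rangle Y)$; since $\langle e_i,\xi\rangle = 0$ and $\langle\xi,\xi\rangle = 1$, tracing yields $\mathrm{trace}\,R^{\bar{M}}(d\phi,\overrightarrow{H})d\phi = \sum_i R^{\bar{M}}(e_i, H\xi)e_i = -ncH\xi$, a purely normal vector. Substituting both computations into (1.1) and separating the equation into its independent normal and tangential components then produces exactly $\Delta H + H\,\mathrm{trace}\,A^2 = ncH$ and $2A\,\mathrm{grad}\,H + nH\,\mathrm{grad}\,H = 0$. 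Since each manipulation is an equivalence, the converse direction is automatic, which establishes the stated characterization. The main obstacle I anticipate is the bookkeeping in the tangential part, namely correctly combining the cross terms $-(e_iH)Ae_j - (e_jH)Ae_i$ arising from the second covariant derivative and invoking Codazzi to identify $\sum_i(\nabla_{e_i}A)e_i$ with $n\,\mathrm{grad}\,H$, all while keeping the sign of the Laplacian convention consistent throughout.
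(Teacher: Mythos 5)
Your proposal is correct and follows exactly the route the paper indicates: the paper states Theorem 2.1 with only the remark that it follows ``by identifying the tangent and the normal parts of the biharmonic condition (1.1)'' (citing Caddeo--Montaldo--Oniciuc and Balmus--Montaldo--Oniciuc), which is precisely the decomposition you carry out. Your computation of $\Delta\overrightarrow{H}$ in a geodesic frame, the Codazzi identity $\sum_i(\nabla_{e_i}A)e_i=n\,\mathrm{grad}\,H$, and the evaluation of the curvature term as $-ncH\xi$ are all correct and consistent with the paper's sign conventions (as one can check against equation (3.27)), so your write-up supplies a valid proof of the statement by the same method.
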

Recall a result on biharmonic hypersurfaces with at most three
distinct principal curvatures in $\mathbb E^{n+1}(c)$ in \cite{fuyu}
for later use.
\begin{theorem}
Let $M^n$ be a proper biharmonic hypersurface with at most three
distinct principal curvatures in $\mathbb E^{n+1}(c)$. Then $M^n$
has constant mean curvature.
\end{theorem}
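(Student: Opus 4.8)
The plan is to argue by contradiction, assuming $H$ is non-constant, and to exploit the two biharmonic equations of Theorem 2.1 together with the Codazzi equation. First I would restrict attention to the open set $U = \{p \in M^n : \operatorname{grad} H \neq 0\}$, which is nonempty under the contradiction hypothesis. The second equation $2A\,\operatorname{grad}H + nH\,\operatorname{grad}H = 0$ tells us that on $U$ the vector $\operatorname{grad}H$ is everywhere an eigenvector of the shape operator $A$ with eigenvalue $-\tfrac{n}{2}H$. Hence I would choose a local orthonormal frame $\{e_1, \dots, e_n\}$ of principal directions, $Ae_i = \lambda_i e_i$, normalized so that $e_1 = \operatorname{grad}H / |\operatorname{grad}H|$; then $\lambda_1 = -\tfrac{n}{2}H$, while $e_1(H) = |\operatorname{grad}H| \neq 0$ and $e_a(H) = 0$ for all $a \geq 2$.

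The heart of the argument is to extract the structure of the connection coefficients $\omega_{ij}^k = \langle \nabla_{e_i}e_j, e_k\rangle$ from the Codazzi equation $(\nabla_{e_i}A)e_j = (\nabla_{e_j}A)e_i$. Expanding this in the principal frame yields the standard relations $e_j(\lambda_i) = (\lambda_j - \lambda_i)\omega_{ij}^i$ and $(\lambda_j - \lambda_k)\omega_{ij}^k = (\lambda_i - \lambda_k)\omega_{ji}^k$. Because $\lambda_1 = -\tfrac{n}{2}H$ and $e_a(H) = 0$ give $e_a(\lambda_1) = 0$ for $a \geq 2$, the first relation immediately forces the coefficients $\omega_{1a}^1$ to vanish wherever $\lambda_a \neq \lambda_1$. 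The hypothesis of \emph{at most three distinct principal curvatures} is what makes this tractable: it restricts $\{\lambda_2, \dots, \lambda_n\}$ to take at most two distinct values, so the remaining eigenspaces can be grouped and the Codazzi bookkeeping stays finite. Combined with $\operatorname{trace}A = nH$, which gives $\sum_{a\geq 2}\lambda_a = \tfrac{3n}{2}H$, I would deduce how the $\lambda_a$ and the frame vary along the $e_1$-integral curves.

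Next I would compute $\Delta H = \operatorname{div}(\operatorname{grad}H)$ in this frame. Since $\operatorname{grad}H = e_1(H)\,e_1$, this reduces to $e_1(e_1 H)$ plus terms built from the connection coefficients and $e_1(H)$, which by the previous step are all controlled by $H$ and the $\lambda_a$. Substituting the result into the first biharmonic equation $\Delta H + H\,\operatorname{trace}A^2 = ncH$ turns it into a differential-algebraic relation along each $e_1$-integral curve. The goal is to combine this with the Codazzi-derived evolution of the $\lambda_a$ to produce an overdetermined system whose only consistent solution has $e_1(H) = 0$, contradicting $\operatorname{grad}H \neq 0$ on $U$.

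I expect the main obstacle to be the three-distinct-principal-curvatures case: unlike the one- or two-curvature situations, the frame need not diagonalize the covariant derivatives cleanly, and one must rule out that the extra eigenvalue allows the structure equations (in particular the Gauss equation, which constrains the sectional curvatures through products $\lambda_i\lambda_j$) to be satisfied with non-constant $H$. Carefully tracking the off-diagonal connection coefficients $\omega_{ab}^c$ among the non-$e_1$ directions, and showing they cannot conspire to absorb the non-vanishing of $e_1(H)$, is the delicate part; everything else is essentially substitution into the two biharmonic equations.
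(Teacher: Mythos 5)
Your first two paragraphs reproduce the standard setup that every paper in this area (including this one, in Section 3) begins with: restrict to the open set where $\operatorname{grad}H\neq0$, take $e_1$ parallel to $\operatorname{grad}H$ so that $\lambda_1=-\tfrac{n}{2}H$, write the Codazzi equations in the principal frame, and use $\operatorname{trace}A=nH$. That part is correct. But the proof stops exactly where the theorem begins. Everything after ``the goal is to combine this with the Codazzi-derived evolution of the $\lambda_a$ to produce an overdetermined system whose only consistent solution has $e_1(H)=0$'' is a statement of intent, not an argument: you never exhibit the system, never show it is overdetermined, and never derive the contradiction. Your final paragraph even concedes that the three-curvature case is ``the delicate part'' and leaves it undone. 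This is not a gap that can be waved away as bookkeeping: the system is \emph{not} overdetermined in any obvious sense, and if the plan as stated could be pushed through by generic substitution, the same plan would apply verbatim to any number of distinct principal curvatures and would settle Chen's conjecture itself, which is open. The entire content of the theorem is the explicit mechanism by which constancy of $H$ is forced, and that mechanism is absent.

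For comparison, note that the present paper does not prove this statement at all --- Theorem 2.2 is quoted from \cite{fuyu} --- but its Section 3 proves the analogous result (four distinct curvatures, with the extra hypothesis of constant scalar curvature), and that proof shows what the missing work looks like: one writes out the Gauss equation componentwise (equations (3.17)--(3.26)), proves a lemma that the non-$e_1$ derivatives of all principal curvatures vanish (Lemma 3.2, itself a page of elimination among connection coefficients), reduces the biharmonic equation to ODEs along integral curves of $e_1$, and then eliminates variables step by step until one obtains a nontrivial polynomial identity $q(H)=0$ with \emph{constant} coefficients, whence $H$ is constant. None of these steps --- not even the assertion $e_a(\lambda_j)=0$ for $a\geq2$, which in your sketch would have to be proved, not assumed --- appears in your proposal. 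A smaller inaccuracy: ``at most three distinct principal curvatures'' does not by itself restrict $\{\lambda_2,\dots,\lambda_n\}$ to two values; you must first rule out $\lambda_a=\lambda_1$ for $a\geq2$, which follows from the Codazzi relation $e_1(\lambda_a)=(\lambda_1-\lambda_a)\omega_{a1}^a$ together with $e_1(\lambda_1)\neq0$, exactly as in (3.10) of the paper. As it stands, your submission is a correct table of contents for a proof, not a proof.
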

\section{Biharmonic hypersurfaces with constant Gauss scalar curvature in $\mathbb E^{5}(c)$}
We restrict ourselves to biharmonic hypersurfaces $M$ in the
5-dimensional space form $\mathbb E^5(c)$.

Assume that the mean curvature $H$ is not constant.

It follows from the second equation of (2.6) that ${\rm grad}\,H$ is
an eigenvector of the shape operator $A$ with the corresponding
principal curvature $-2H$. Therefore, without loss of generality, we
choose $e_1$ such that $e_1$ is parallel to ${\rm grad}\,H$, and
hence the shape operator $A$ of $M$ takes the following form with
respect to some suitable orthonormal frame $\{e_1, e_2, e_3, e_4\}$
\begin{eqnarray}
Ae_i=\lambda_i e_i,
\end{eqnarray}
where $\lambda_1=-2H$.

Denote by $R$ the scalar curvature and by $B$ the squared length of
the second fundamental form $h$ of $M$. It follows from (3.1) that
$B$ is given by
\begin{eqnarray}
B=\sum_{i=1}^4\lambda^2_i=4H^2+\lambda_2^2+\lambda_3^2+\lambda_4^2.
\end{eqnarray}
From the Gauss equation, the scalar curvature $R$ is given by
\begin{eqnarray}
R=12c+16H^2-B=12c+12H^2-\lambda_2^2-\lambda_3^2-\lambda_4^2.
\end{eqnarray}
We compute ${\rm grad}\,H$ as
\begin{eqnarray*}
{\rm grad}\,H=\sum_{i=1}^4e_i(H)e_i.
\end{eqnarray*}
Since $e_1$ is parallel to ${\rm grad}\,H$, it follows that
\begin{eqnarray}
e_1(H)\neq0,\quad e_2(H)=e_3(H)=e_4(H)=0.
\end{eqnarray}
We set
\begin{eqnarray}
\nabla_{e_i}e_j=\sum_{k=1}^4\omega_{ij}^ke_k,\quad i,j=1, 2, 3, 4.
\end{eqnarray}
The compatibility conditions $\nabla_{e_k}\langle e_i,e_i\rangle=0$
and $\nabla_{e_k}\langle e_i,e_j\rangle=0$ $(i\neq j)$ give,
respectively, that
\begin{eqnarray}
\omega_{ki}^i=0,\quad \omega_{ki}^j+\omega_{kj}^i=0,
\end{eqnarray}
for $i\neq j$ and $i, j, k=1, 2, 3, 4$. From (3.1) and (3.4), the
Codazzi equation leads to
\begin{eqnarray}
e_i(\lambda_j)=(\lambda_i-\lambda_j)\omega_{ji}^j,\\
(\lambda_i-\lambda_j)\omega_{ki}^j=(\lambda_k-\lambda_j)\omega_{ik}^j
\end{eqnarray}
for distinct $i, j, k=1, 2, 3, 4$.

Since $\lambda_1=-2H$, from (3.4) we compute that
\begin{eqnarray*}
[e_2,e_3](\lambda_1)=[e_3,e_4](\lambda_1)=[e_2,e_4](\lambda_1)=0,
\end{eqnarray*}
which yields directly
\begin{eqnarray}
\omega_{ij}^1=\omega_{ji}^1, \quad i, j=2, 3, 4 ~\,{\rm and}\,
~i\neq j.
\end{eqnarray}
Now we claim that $\lambda_j\neq\lambda_1$ for $j=2, 3, 4$. In fact,
if $\lambda_j=\lambda_1$ for $j\neq1$, by putting $i=1$ in (3.7) we
have that
\begin{eqnarray}
0=(\lambda_1-\lambda_j)\omega_{j1}^j=e_1(\lambda_j)=e_1(\lambda_1).
\end{eqnarray}
However, (3.10) contradicts to the first expression of (3.4).

According to Theorem 2.1, we only need to deal with the case for $M$
to have four distinct principal curvatures. Hence, we assume that
$M$ has our distinct principal curvatures in the following.

By the definition (2.4) of the mean curvature vector field
$\overrightarrow{H}$ and $\lambda_1=-2H$, we have
\begin{eqnarray}
\lambda_2+\lambda_3+\lambda_4=6H
\end{eqnarray}
for distinct $\lambda_2, \lambda_3, \lambda_4$ and $\lambda_i\neq
-2H$.

We now state a lemma to express the connection coefficients of $M$.
\begin{lemma}
Let $M$ be a biharmonic hypersurface with four distinct principal
curvatures in space forms $\mathbb E^5(c)$, whose shape operator
given by (3.1) with respect to an orthonormal frame $\{e_1, e_2,
e_3, e_4\}$. Then we have
\begin{eqnarray*}
&&\nabla_{e_1}e_i=0,\quad i=1, 2, 3, 4,\\
&&\nabla_{e_i}e_1=-\omega_{ii}^1e_i,\quad i=2, 3, 4,\\
&&\nabla_{e_i}e_i=\sum_{k=1, k\neq i}^{4}{\omega_{ii}^k}e_k,\quad i=2, 3, 4,\\
&&\nabla_{e_i}e_j=-{\omega_{ii}^j}e_i+\omega_{ij}^ke_k ~for~
distinct ~i, j, k=2, 3, 4,
\end{eqnarray*}
where
\begin{eqnarray*}
\omega_{ii}^j=-\frac{e_j(\lambda_i)}{\lambda_j-\lambda_i}.
\end{eqnarray*}
\end{lemma}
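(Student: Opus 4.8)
The plan is to compute every connection coefficient $\omega_{ij}^k$ defined in (3.5) by systematically combining the compatibility conditions (3.6), the Codazzi relations (3.7) and (3.8), the symmetry (3.9), and the information $e_2(H)=e_3(H)=e_4(H)=0$ recorded in (3.4). Since $\lambda_1=-2H$, this last fact reads $e_2(\lambda_1)=e_3(\lambda_1)=e_4(\lambda_1)=0$, whereas $e_1(\lambda_1)\neq0$. These vanishings, fed into the structure equations, will collapse most coefficients to zero, and the four displayed formulas will then be obtained by discarding the vanishing terms in $\nabla_{e_i}e_j=\sum_k\omega_{ij}^k e_k$.

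First I would derive the displayed expression for $\omega_{ii}^j$. The second relation in (3.6) with $k=i$ gives $\omega_{ii}^j=-\omega_{ij}^i$, while reading the $e_i$-component of the Codazzi identity (equivalently, the relabeled form of (3.7)) yields $\omega_{ij}^i=e_j(\lambda_i)/(\lambda_j-\lambda_i)$, which is well defined because the four principal curvatures are pairwise distinct. Combining these gives $\omega_{ii}^j=-e_j(\lambda_i)/(\lambda_j-\lambda_i)$.

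The heart of the argument, and the step I expect to be the main obstacle, is proving $\nabla_{e_1}e_i=0$ for all $i$, which requires showing that every $\omega_{1i}^k$ and $\omega_{11}^k$ vanishes. The crucial sub-step is the vanishing of $\omega_{ij}^1$ for distinct $i,j\in\{2,3,4\}$: substituting the symmetry (3.9) into the Codazzi relation (3.8) specialized so that its free upper index equals $1$ collapses it to $(\lambda_i-\lambda_j)\omega_{ij}^1=0$, and since $\lambda_i\neq\lambda_j$ we conclude $\omega_{ij}^1=0$. With this in hand the remaining coefficients follow quickly: (3.7) with $j=1$ together with $e_i(\lambda_1)=0$ forces $\omega_{1i}^1=0$ for $i\in\{2,3,4\}$; the antisymmetry in (3.6) converts $\omega_{ij}^1=0$ into $\omega_{i1}^j=0$, and then (3.8) specialized to the triple $(1,i,j)$ forces $\omega_{1i}^j=0$; finally the compatibility relations turn the coefficients already proven to vanish into $\omega_{11}^k=0$. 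Assembling these gives $\nabla_{e_1}e_i=0$ for every $i$.

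For the three remaining formulas I would merely expand $\nabla_{e_i}e_j=\sum_{k=1}^4\omega_{ij}^k e_k$ and strike out the coefficients now known to vanish, using $\omega_{ii}^i=0$ and the antisymmetry $\omega_{ij}^i=-\omega_{ii}^j$ from (3.6). For $\nabla_{e_i}e_1$ the only survivor is $\omega_{i1}^i e_i=-\omega_{ii}^1 e_i$; for $\nabla_{e_i}e_i$ only the $e_i$-component drops, leaving $\sum_{k\neq i}\omega_{ii}^k e_k$; and for $\nabla_{e_i}e_j$ with distinct $i,j,k\in\{2,3,4\}$ the $e_1$- and $e_j$-components vanish while the $e_i$-component equals $-\omega_{ii}^j$, giving $-\omega_{ii}^j e_i+\omega_{ij}^k e_k$. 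This final stage is routine bookkeeping once the vanishing coefficients have been identified.
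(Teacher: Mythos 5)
Your proposal is correct and follows essentially the same route as the paper: both use the Codazzi relation (3.7) with $j=1$ and $e_i(\lambda_1)=0$ to kill $\omega_{1i}^1$, combine (3.8) with the symmetry (3.9) to kill $\omega_{ij}^1$ for distinct $i,j\in\{2,3,4\}$, then propagate these vanishings through the antisymmetry in (3.6) and a second application of (3.8) to get $\omega_{i1}^j=\omega_{1i}^j=\omega_{11}^k=0$, with the formula for $\omega_{ii}^j$ coming from (3.6) and the relabeled (3.7). The only differences are the order of the steps and that you spell out the $\omega_{ii}^j$ computation which the paper leaves implicit; mathematically the arguments coincide.
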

\begin{proof}
Consider the equations (3.7) and (3.8).

By putting $j=1$ and $i=2, 3, 4$ in (3.7), from (3.4) we have
$\omega_{1i}^1=0$, which together with the first expression of (3.6)
gives
\begin{eqnarray}
\omega_{1i}^1=0,\quad i=1, 2, 3, 4.
\end{eqnarray}
Combining (3.12) with the second expression of (3.6) gives
\begin{eqnarray}
\omega_{11}^i=0,\quad i=1, 2, 3, 4.
\end{eqnarray}

By putting $j=1$, $i, k=2, 3, 4$ in (3.8), and applying (3.9) we
have
\begin{eqnarray}
\omega_{ij}^1=\omega_{ji}^1=0,
\end{eqnarray}
which together with the second expression of (3.6) yields
\begin{eqnarray}
\omega_{i1}^j=0,\quad i, j=2, 3, 4,\, {\rm and}~\, i\neq j.
\end{eqnarray}
By applying (3.8) again, from (3.15) it follows that
\begin{eqnarray}
\omega_{1i}^j=0,\quad i, j=2, 3, 4,\, {\rm and}~\, i\neq j.
\end{eqnarray}
Combining (3.12-3.16) with (3.6) and (3.7), we complete the proof of
Lemma 3.1.
\end{proof}
Since the Gauss curvature tensor $R(X,Y)Z$ is defined by
\begin{eqnarray*}
R(X,Y)Z=\nabla_X\nabla_YZ-\nabla_Y\nabla_XZ-\nabla_{[X,Y]}Z,
\end{eqnarray*}
we could compute the curvature tensor $R$ by Lemma 3.1. On the other
hand, by applying the Gauss equation for different values of $X$,
$Y$ and $Z$ and by comparing the coefficients with respect to the
orthonormal basis $\{e_1, e_2, e_3, e_4\}$ we get the following:
\begin{itemize}
\item $X=e_1, Y=e_2, Z=e_1$,
\begin{eqnarray}
e_1(\omega_{22}^1)-(\omega_{22}^1)^2=\lambda_1\lambda_2+c;
\end{eqnarray}
\item $X=e_1, Y=e_3, Z=e_1$,
\begin{eqnarray}
e_1(\omega_{33}^1)-(\omega_{33}^1)^2=\lambda_1\lambda_3+c;
\end{eqnarray}
\item $X=e_1, Y=e_4, Z=e_1$,
\begin{eqnarray}
e_1(\omega_{44}^1)-(\omega_{44}^1)^2=\lambda_1\lambda_4+c;
\end{eqnarray}
\item $X=e_1, Y=e_3, Z=e_3$,
\begin{eqnarray}
e_1(\omega_{33}^2)=\omega_{33}^1\omega_{33}^2;
\end{eqnarray}
\item $X=e_1, Y=e_4, Z=e_4$,
\begin{eqnarray}
e_1(\omega_{44}^2)=\omega_{44}^1\omega_{44}^2;
\end{eqnarray}
\item $X=e_2, Y=e_3, Z=e_3$,
\begin{eqnarray}
e_2(\omega_{33}^1)=-\omega_{22}^1\omega_{33}^2+\omega_{33}^1\omega_{33}^2;
\end{eqnarray}
\item $X=e_2, Y=e_4, Z=e_4$,
\begin{eqnarray}
e_2(\omega_{44}^1)=-\omega_{22}^1\omega_{44}^2+\omega_{44}^1\omega_{44}^2;
\end{eqnarray}
\item $X=e_2, Y=e_3, Z=e_2$,
\begin{eqnarray}
&&-e_2(\omega_{33}^2)-e_3(\omega_{22}^3)+\omega_{22}^4\omega_{33}^4+(\omega_{22}^3)^2+(\omega_{33}^2)^2\nonumber\\
&&+\omega_{22}^1\omega_{33}^1-\omega_{24}^3\omega_{34}^2-\omega_{24}^3\omega_{43}^2+\omega_{34}^2\omega_{43}^2=-(c+\lambda_2\lambda_3);
\end{eqnarray}
\item $X=e_2, Y=e_4, Z=e_2$,
\begin{eqnarray}
&&-e_4(\omega_{22}^4)-e_2(\omega_{44}^2)+\omega_{22}^3\omega_{44}^3+(\omega_{22}^4)^2+(\omega_{44}^2)^2\nonumber\\
&&+\omega_{22}^1\omega_{44}^1+\omega_{24}^3\omega_{34}^2+\omega_{24}^3\omega_{43}^2+\omega_{34}^2\omega_{43}^2=-(c+\lambda_2\lambda_4);
\end{eqnarray}
\item $X=e_3, Y=e_4, Z=e_3$,
\begin{eqnarray}
&&-e_3(\omega_{44}^3)-e_4(\omega_{33}^4)+\omega_{33}^2\omega_{44}^2+(\omega_{33}^4)^2+(\omega_{44}^3)^2\nonumber\\
&&+\omega_{33}^1\omega_{44}^1+\omega_{24}^3\omega_{34}^2-\omega_{24}^3\omega_{43}^2-\omega_{34}^2\omega_{43}^2=-(c+\lambda_3\lambda_4).
\end{eqnarray}
\end{itemize}
Note that in the above we only state the equations useful for later
use.

 Consider the first equation of (2.6). It follows from
(3.1), (3.3), and Lemma 3.1 that
\begin{equation}
-e_1e_1(H)+(\omega_{22}^1+\omega_{33}^1+\omega_{44}^1)e_1(H)+H(8c+16H^2-R)=0.
\end{equation}
Let us compute $[e_1,e_i](H)=(\nabla_{e_1}e_i-\nabla_{e_i}e_1)(H)$
for $i=2, 3, 4$. From (3.4) and Lemma 3.1, it follows that
\begin{eqnarray}
e_ie_1(H)=0,\quad i=2, 3, 4.
\end{eqnarray}
\begin{lemma}
Let $M$ be a biharmonic hypersurface with four distinct principal
curvatures in space forms $\mathbb E^5(c)$, and whose shape operator
given by (3.1) with respect to an orthonormal frame $\{e_1, e_2,
e_3, e_4\}$. If the scalar curvature $R$ is constant, then
$e_i(\lambda_j)=0$ for $i=2, 3, 4$ and $j=1, 2, 3, 4$.
\end{lemma}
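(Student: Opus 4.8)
The plan is to dispose of the trivial cases first and then isolate the genuine content. For $j=1$ the claim is immediate: since $\lambda_1=-2H$ and $e_i(H)=0$ for $i=2,3,4$ by (3.4), we get $e_i(\lambda_1)=-2e_i(H)=0$. Next, differentiating the mean-curvature relation (3.11) along $e_i$ and using $e_i(H)=0$ gives $e_i(\lambda_2)+e_i(\lambda_3)+e_i(\lambda_4)=0$, so once the off-diagonal derivatives $e_i(\lambda_j)$ with $i\neq j$ are shown to vanish, the diagonal ones $e_i(\lambda_i)$ vanish automatically. Thus everything reduces to proving $e_i(\lambda_j)=0$ for all $i,j\in\{2,3,4\}$.

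First I would extract the two linear constraints that the hypotheses give for free. Fix $i\in\{2,3,4\}$ and write $a_j:=e_i(\lambda_j)$ for $j=2,3,4$. Relation (3.11) yields $a_2+a_3+a_4=0$, while differentiating the scalar-curvature expression (3.3) along $e_i$, using that $R$ is constant and $e_i(H)=0$, yields $\lambda_2a_2+\lambda_3a_3+\lambda_4a_4=0$. Hence $(a_2,a_3,a_4)$ is orthogonal to both $(1,1,1)$ and $(\lambda_2,\lambda_3,\lambda_4)$, so it is a scalar multiple of their cross product: there is a function $t_i$ with
\[ e_i(\lambda_2)=t_i(\lambda_3-\lambda_4),\quad e_i(\lambda_3)=t_i(\lambda_4-\lambda_2),\quad e_i(\lambda_4)=t_i(\lambda_2-\lambda_3). \]
The proof is therefore reduced to showing $t_2=t_3=t_4=0$.

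To finish I need one further \emph{independent} linear relation among $e_i(\lambda_2),e_i(\lambda_3),e_i(\lambda_4)$: together with the two relations already found it would form a coefficient matrix whose first two rows are $(1,1,1)$ and $(\lambda_2,\lambda_3,\lambda_4)$, a Vandermonde-type system whose determinant is controlled by the pairwise differences $\lambda_a-\lambda_b$ and is nonzero precisely because the four principal curvatures are distinct, forcing $t_i=0$. The point is that the two constraints in hand only encode the first two power sums of $\{\lambda_2,\lambda_3,\lambda_4\}$ (equivalently $H$ and $B$), which by themselves leave a three-parameter family $(t_2,t_3,t_4)$ undetermined; the missing information must come from the curvature of $M$. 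I would therefore substitute the parametrization $e_i(\lambda_j)=t_iv_j$, with $v=(\lambda_3-\lambda_4,\lambda_4-\lambda_2,\lambda_2-\lambda_3)$, into the integrability data. One clean preliminary computation is $e_i(v_m)=3t_i(2H-\lambda_m)$, from which a direct evaluation of $[e_i,e_j](\lambda_m)$ using the connection coefficients of Lemma 3.1 shows that \emph{both} the derivative side and the connection side are proportional to $v_m$; cancelling $v_m$ (nonzero by distinctness) converts each commutator identity into a scalar relation among $t_2,t_3,t_4$ and the connection coefficients.

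The main obstacle is exactly this last step: producing and organizing the third relation, since it does not follow from $H$ or $R$ alone but must be squeezed out of the full system of structure equations (3.17)--(3.27) together with the biharmonic trace equation (3.28). The delicate part is the bookkeeping of the fully off-diagonal coefficients $\omega_{ab}^c$, which are coupled to the $t_i$ through the sectional-curvature equations (3.25)--(3.27); keeping track of these couplings while eliminating the first-order derivatives $e_i(t_j)$ is where the real work lies. Once a relation independent of the two above is secured, the nonvanishing Vandermonde determinant gives $t_i=0$, hence $e_i(\lambda_j)=0$ for all $i=2,3,4$ and $j=1,2,3,4$, completing the proof.
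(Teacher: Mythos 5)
Your proposal has a genuine gap, and you essentially concede it yourself. The first half is correct and matches the paper's starting point: the case $j=1$ is trivial from (3.4), and differentiating (3.11) and (3.3) along $e_i$ (using $e_i(H)=0$ and $R$ constant) gives the two linear relations $\sum_j e_i(\lambda_j)=0$ and $\sum_j \lambda_j e_i(\lambda_j)=0$, reducing the lemma to showing your proportionality factors $t_i$ vanish. (In the paper's notation this pair is exactly equation (3.34), since $e_2(\lambda_3)=-(\lambda_2-\lambda_3)\omega_{33}^2$ and $e_2(\lambda_4)=-(\lambda_2-\lambda_4)\omega_{44}^2$, and $t_2=0$ is equivalent to $\omega_{33}^2=\omega_{44}^2=0$.) But the entire content of the lemma lies in the step you defer as ``where the real work lies'': actually producing the information that kills $t_i$. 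Your sketch of the commutator computation only yields differential identities of the form $e_i(t_j)-e_j(t_i)=(\text{connection terms})\cdot t$, which are first-order relations among unknown functions and do not force any $t_i$ to vanish; nothing in your argument ever invokes the biharmonic equation beyond the eigenvector condition, and without it the conclusion is false (a generic constant-scalar-curvature hypersurface has no reason to satisfy $e_i(\lambda_j)=0$).

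The paper closes this gap as follows, and the mechanism is different from the Vandermonde endgame you envision. Differentiating the biharmonic trace equation (3.27) along $e_2$ (using $e_2e_1(H)=0$, i.e.\ (3.28)) and exploiting the Gauss equations (3.22)--(3.23) yields the relation (3.33); differentiating (3.34) along $e_1$ with the Gauss equations (3.20)--(3.21) yields (3.35). These are indeed extra linear relations in $(\omega_{33}^2,\omega_{44}^2)$, but their coefficients involve the unknown connection coefficients $\omega_{22}^1,\omega_{33}^1,\omega_{44}^1$, so no determinant argument with coefficients depending only on the $\lambda_j$'s is available --- this is why your plan of ``one more row plus a nonzero Vandermonde determinant'' cannot be executed as stated. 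Instead the paper argues by contradiction: if $\omega_{33}^2,\omega_{44}^2$ are not both zero, then (3.33)--(3.35) together with the algebraic relation (3.40) force $\omega_{22}^1=\omega_{33}^1=\omega_{44}^1=0$, and then (3.30) gives $e_1(H)=0$, contradicting (3.4). Supplying this chain (or an equivalent use of the biharmonic equation and the curvature equations) is precisely what your proposal is missing.
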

\begin{proof}
By the hypothesis, the scalar curvature $R$ is constant.
Differentiating (3.27) along $e_2$, from (3.28) we have
\begin{eqnarray}
e_2(\omega_{22}^1+\omega_{33}^1+\omega_{44}^1)=0.
\end{eqnarray}
On the other hand, differentiating (3.11) along $e_1$, by (3.7) and
the second equation of (3.6) we obtain
\begin{eqnarray}
(\lambda_1-\lambda_2)\omega_{22}^1+(\lambda_1-\lambda_3)\omega_{33}^1+(\lambda_1-\lambda_4)\omega_{44}^1=-6e_1(H),
\end{eqnarray}
which reduces to
\begin{eqnarray*}
\omega_{22}^1+\omega_{33}^1+\omega_{44}^1=-\frac{\lambda_2-\lambda_3}{\lambda_1-\lambda_2}\omega_{33}^1
-\frac{\lambda_2-\lambda_4}{\lambda_1-\lambda_2}\omega_{44}^1-\frac{6e_1(H)}{\lambda_1-\lambda_2}.
\end{eqnarray*}
Now acting $e_2$ on both sides of the above equation, by (3.11),
(3.4), (3.22) and (3.23) we obtain
\begin{eqnarray}
e_2(\omega_{22}^1+\omega_{33}^1+\omega_{44}^1)&=&\Big(\frac{2e_2(\lambda_3)+e_2(\lambda_4)}{\lambda_1-\lambda_2}+
\frac{(\lambda_2-\lambda_3)\big(e_2(\lambda_3)+e_2(\lambda_4)\big)}{(\lambda_1-\lambda_2)^2}\Big)\omega_{33}^1\nonumber\\
&+&\Big(\frac{e_2(\lambda_3)+2e_2(\lambda_4)}{\lambda_1-\lambda_2}+
\frac{(\lambda_2-\lambda_4)\big(e_2(\lambda_3)+e_2(\lambda_4)\big)}{(\lambda_1-\lambda_2)^2}\Big)\omega_{44}^1\nonumber\\
&+&\frac{6e_1(H)\big(e_2(\lambda_3)+e_2(\lambda_4)\big)}{(\lambda_1-\lambda_2)^2}+
\frac{\lambda_2-\lambda_3}{\lambda_1-\lambda_2}\omega_{33}^2(\omega_{22}^1-\omega_{33}^1)+\nonumber\\
&+&\frac{\lambda_2-\lambda_4}{\lambda_1-\lambda_2}\omega_{44}^2(\omega_{22}^1-\omega_{44}^1)\nonumber\\
&=&\frac{(2\lambda_1-\lambda_2-\lambda_3)e_2(\lambda_3)+(\lambda_1-\lambda_3)e_2(\lambda_4)}{(\lambda_1-\lambda_2)^2}\omega_{33}^1\nonumber\\
&+&\frac{(\lambda_1-\lambda_4)e_2(\lambda_3)+(2\lambda_1-\lambda_2-\lambda_4)e_2(\lambda_4)}{(\lambda_1-\lambda_2)^2}\omega_{44}^1\nonumber\\
&-&\frac{(\lambda_1-\lambda_2)\omega_{22}^1+(\lambda_1-\lambda_3)\omega_{33}^1+(\lambda_1-\lambda_4)\omega_{44}^1}{(\lambda_1-\lambda_2)^2}
\big(e_2(\lambda_3)+e_2(\lambda_4)\big)\nonumber\\
&+&\frac{\lambda_2-\lambda_3}{\lambda_1-\lambda_2}\omega_{33}^2(\omega_{22}^1-\omega_{33}^1)+
\frac{\lambda_2-\lambda_4}{\lambda_1-\lambda_2}\omega_{44}^2(\omega_{22}^1-\omega_{44}^1).
\end{eqnarray}
From (3.7) and the second expression of (3.6), we have
$e_2(\lambda_3)=-(\lambda_2-\lambda_3)\omega_{33}^2$ and
$e_2(\lambda_4)=-(\lambda_2-\lambda_4)\omega_{44}^2$. Substituting
these into (3.31) gives
\begin{equation}
e_2(\omega_{22}^1+\omega_{33}^1+\omega_{44}^1)=\frac{2(\lambda_2-\lambda_3)}{\lambda_1-\lambda_2}
(\omega_{22}^1-\omega_{33}^1)\omega_{33}^2+
\frac{2(\lambda_2-\lambda_4)}{\lambda_1-\lambda_2}
(\omega_{22}^1-\omega_{44}^1)\omega_{44}^2.
\end{equation}
Combining (3.32) with (3.29) gives
\begin{eqnarray}
(\lambda_2-\lambda_3)(\omega_{22}^1-\omega_{33}^1)\omega_{33}^2
+(\lambda_2-\lambda_4)(\omega_{22}^1-\omega_{44}^1)\omega_{44}^2=0.
\end{eqnarray}
Moreover, differentiating (3.3) along $e_2$, by (3.11) and (3.7) we
have
\begin{eqnarray}
(\lambda_2-\lambda_3)^2\omega_{33}^2+(\lambda_2-\lambda_4)^2\omega_{44}^2=0.
\end{eqnarray}
Differentiating (3.34) along $e_1$, by applying (3.7), the second
expression of (3.6), (3.20) and (3.21) we obtain
\begin{eqnarray}
&&(\lambda_2-\lambda_3)\Big[2(\lambda_1-\lambda_2)\omega_{22}^1-(2\lambda_1+\lambda_2-3\lambda_3)\omega_{33}^1
\Big]\omega_{33}^2\\
&&+(\lambda_2-\lambda_4)\Big[2(\lambda_1-\lambda_2)\omega_{22}^1-(2\lambda_1+\lambda_2-3\lambda_4)\omega_{44}^1
\Big]\omega_{44}^2=0.\nonumber
\end{eqnarray}
We claim that $\omega_{33}^2=\omega_{44}^2=0$.

In fact, if one of $\omega_{33}^2$ and $\omega_{44}^2$ is not
vanishing, (3.33) and (3.34) imply that
\begin{eqnarray}
(\lambda_3-\lambda_4)\omega_{22}^1-(\lambda_2-\lambda_4)\omega_{33}^1
+(\lambda_2-\lambda_3)\omega_{44}^1=0.
\end{eqnarray}
Also, (3.34) and (3.35) reduce to
\begin{eqnarray}
2(\lambda_1-\lambda_2)(\lambda_3-\lambda_4)\omega_{22}^1-(\lambda_2-\lambda_4)(2\lambda_1+\lambda_2-3\lambda_3)\omega_{33}^1\\
+(\lambda_2-\lambda_3)(2\lambda_1+\lambda_2-3\lambda_4)\omega_{44}^1=0.\nonumber
\end{eqnarray}
Eliminating $\omega_{22}^1$ between (3.36) and (3.37) gives
\begin{eqnarray*}
3(\lambda_2-\lambda_3)(\lambda_2-\lambda_4)(\omega_{33}^1-\omega_{44}^1)=0,
\end{eqnarray*}
which yields
\begin{eqnarray}
\omega_{33}^1=\omega_{44}^1.
\end{eqnarray}
Substituting (3.38) into (3.36), we obtain
\begin{eqnarray}
\omega_{22}^1=\omega_{33}^1.
\end{eqnarray}
Acting $e_1$ on both sides of (3.3) and (3.11), by using (3.7) and
the second expression of (3.6) we obtain a relation
\begin{eqnarray}
(\lambda_1-\lambda_2)(2\lambda_2-\lambda_3-\lambda_4)\omega_{22}^1-(\lambda_1-\lambda_3)(\lambda_2-2\lambda_3+\lambda_4)\omega_{33}^1\\
-(\lambda_1-\lambda_4)(\lambda_2+\lambda_3-2\lambda_4)\omega_{44}^1=0,\nonumber
\end{eqnarray}
which together with (3.38) and (3.39) yields
\begin{eqnarray}
\big[(\lambda_2-\lambda_3)^2+(\lambda_2-\lambda_4)^2+(\lambda_3-\lambda_4)^2\big]\omega_{22}^1=0.
\end{eqnarray}
Since the principal curvatures $\lambda_i$ $(i=1, 2, 3, 4)$ are
mutually different, it follows from (3.41), (3.38) and (3.39) that
\begin{eqnarray}
\omega_{22}^1=\omega_{33}^1=\omega_{44}^1=0.
\end{eqnarray}
Combining (3.30) with (3.42) gives $e_1(H)=0$, which contradicts to
the first expression of (3.4).

Therefore, we conclude $\omega_{33}^2=\omega_{44}^2=0$. By (3.7),
(3.4) and (3.11), we obtain $e_2(\lambda_i)=0$ for $i=1, 2, 3, 4$.

With some similar discussions, we could show that
$e_3(\lambda_i)=e_4(\lambda_i)=0$ for $i=1, 2, 3, 4$ as well. This
completes the proof of Lemma 3.2.
\end{proof}
We are ready to state the main theorem.
\begin{theorem}
Let $M$ be a proper biharmonic hypersurface with constant scalar
curvature in $\mathbb E^{5}(c)$. Then $M$ has constant mean
curvature.
\end{theorem}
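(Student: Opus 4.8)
The plan is to argue by contradiction: assume the mean curvature $H$ is not constant and show this is impossible. By Theorem 2.2 a proper biharmonic hypersurface with at most three distinct principal curvatures already has constant mean curvature, so I may assume $M$ has four distinct principal curvatures; then the whole set-up (3.1)--(3.11), Lemma 3.1 and Lemma 3.2 is available. Since the scalar curvature is constant by hypothesis, Lemma 3.2 gives $e_i(\lambda_j)=0$ for $i=2,3,4$ and all $j$, so every principal curvature is a function of the single parameter along the $e_1$-direction. Because $\nabla_{e_1}e_i=0$ (Lemma 3.1), the $e_1$-curves are geodesics carrying a parallel frame, and $\omega_{ii}^{j}=-e_j(\lambda_i)/(\lambda_j-\lambda_i)=0$ for distinct $i,j\in\{2,3,4\}$. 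Feeding these vanishings into the Gauss equations (3.24)--(3.26) collapses all the derivative terms and leaves purely algebraic relations between the ``radial'' coefficients $f_i:=\omega_{ii}^{1}$ and the cross coefficients $\omega_{ij}^{k}$ ($i,j,k$ distinct).

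The next step is to eliminate the cross coefficients. The Codazzi equation (3.8) forces $\omega_{23}^{4},\omega_{32}^{4},\omega_{42}^{3}$ and their compatibility partners to be proportional to a single function, while computing $R(e_1,e_j)e_k=0$ from the Gauss equation yields a linear evolution $e_1(\omega_{ij}^{k})=f_i\,\omega_{ij}^{k}$ along the geodesic. I would combine these proportionality and evolution relations with the simplified (3.24)--(3.26) to conclude that the cross coefficients vanish; then (3.24)--(3.26) reduce to the clean system
\[
f_2f_3=-(c+\lambda_2\lambda_3),\quad f_2f_4=-(c+\lambda_2\lambda_4),\quad f_3f_4=-(c+\lambda_3\lambda_4),
\]
which determines each $f_i$ (up to sign) as an explicit function of the $\lambda$'s.

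Finally I would derive the contradiction from the resulting over-determined first-order system. Along the $e_1$-geodesic the data $(\lambda_i,f_i)$, $i=2,3,4$, satisfy the Riccati equations (3.17)--(3.19), namely $e_1(f_i)=f_i^{2}-2H\lambda_i+c$, together with $e_1(\lambda_i)=f_i(\lambda_i+2H)$ coming from (3.7) and $\lambda_1=-2H$, where $H=\tfrac16(\lambda_2+\lambda_3+\lambda_4)$ by (3.11); note that $\sum_i f_i(\lambda_i+2H)=6\,e_1(H)$. On top of this flow I must impose the two standing algebraic constraints --- the mean-curvature relation (3.11) and the constant-scalar-curvature relation $\lambda_2^2+\lambda_3^2+\lambda_4^2=12c+12H^2-R$ from (3.3) --- and the biharmonic equation (3.27). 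Differentiating the scalar-curvature relation and using the flow reduces its preservation to $\sum_i f_i\lambda_i^{2}=4H^{2}\sum_i f_i$; combining this with the relations $f_if_j=-(c+\lambda_i\lambda_j)$ and with (3.27) should yield an algebraic identity in $\lambda_2,\lambda_3,\lambda_4$ that, since these are mutually distinct, forces $\sum_i f_i(\lambda_i+2H)=0$, i.e. $e_1(H)=0$. This contradicts (3.4), so $H$ must be constant.

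I expect the hard part to be the middle step --- showing that the cross coefficients $\omega_{ij}^{k}$ genuinely vanish --- and the bookkeeping of the final elimination, where one must verify that the algebraic consequences of the Gauss--Codazzi system, the two curvature constraints and the biharmonic equation are jointly incompatible with $e_1(H)\neq0$, using throughout that $\lambda_1,\dots,\lambda_4$ are pairwise distinct.
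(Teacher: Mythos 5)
Your setup matches the paper's: reduce to four distinct principal curvatures via Theorem 2.2, then invoke Lemmas 3.1 and 3.2 so that the Gauss equations (3.24)--(3.26) lose their derivative terms. But your middle step --- the claim that the remaining cross coefficients $\omega_{24}^3,\omega_{34}^2,\omega_{43}^2$ genuinely vanish --- is a real gap, and it appears not to follow from the equations you cite. The Codazzi relations (3.8) make these three coefficients proportional to one another with ratios given by differences of principal curvatures (this is (3.47) and its two analogues), so they reduce to a single unknown function $u$; substituting these proportionalities into the Gauss-derived relation (3.46), one finds that both sides equal $\frac{\lambda_2-\lambda_4}{\lambda_3-\lambda_4}\,u^2$, i.e.\ (3.46) becomes an identity and forces nothing. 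The paper never shows $u=0$; instead it treats the cross-term products appearing in (3.43)--(3.45) as one unknown and eliminates it, which leaves only the two relations (3.48)--(3.49) rather than your three clean equations $f_if_j=-(c+\lambda_i\lambda_j)$. So the system you propose to integrate is strictly stronger than what the geometry provides, and everything built on it is unsupported.

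The endgame is also a gap. You expect that differentiating the constraints along $e_1$ and using pairwise distinctness of the $\lambda_i$ ``should yield an algebraic identity'' forcing $e_1(H)=0$; no such identity is derived, and the paper's proof indicates the contradiction cannot be reached this cheaply. What the paper actually does is combine the biharmonic equation (3.27) with the Riccati-type equations (3.50)--(3.52) to get (3.53)--(3.54), compute $e_1(K)$ for $K=\lambda_2\lambda_3\lambda_4$ in (3.59)--(3.60), obtain the first integral (3.61), and then, after one more differentiation, a polynomial equation (3.62) in $H$ and $K$ with constant coefficients. The contradiction is then reached by cases: if $K$ is constant, $H$ is a root of a fixed nontrivial polynomial and hence constant; if $K$ is non-constant, one differentiates (3.62) with respect to $K$ (using (3.63)--(3.64)), obtains the second independent relation (3.65), and eliminates the powers of $K$ to conclude again that $H$ satisfies a polynomial with constant coefficients, hence is constant --- contradicting $e_1(H)\neq 0$. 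Note that the contradiction is never ``$e_1(H)=0$ directly from an identity,'' but rather ``$H$ is algebraic over the constants''; your sketch contains neither this elimination machinery nor any substitute for it, so the proof is incomplete at both of its decisive points.
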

\begin{proof}
By Lemma 3.2, equations (3.24-3.26), respectively, reduce to
\begin{eqnarray}
&&\omega_{22}^1\omega_{33}^1-\omega_{24}^3\omega_{34}^2-\omega_{24}^3\omega_{43}^2+\omega_{34}^2\omega_{43}^2=-(c+\lambda_2\lambda_3),\\
&&\omega_{22}^1\omega_{44}^1+\omega_{24}^3\omega_{34}^2+\omega_{24}^3\omega_{43}^2+\omega_{34}^2\omega_{43}^2=-(c+\lambda_2\lambda_4),\\
&&\omega_{33}^1\omega_{44}^1+\omega_{24}^3\omega_{34}^2-\omega_{24}^3\omega_{43}^2-\omega_{34}^2\omega_{43}^2=-(c+\lambda_3\lambda_4).
\end{eqnarray}
Moreover, it follows from (3.8) and the second expression of (3.6)
that
\begin{eqnarray}
\omega_{24}^3\omega_{34}^2=\omega_{24}^3\omega_{43}^2-\omega_{34}^2\omega_{43}^2,\\
(\lambda_3-\lambda_4)\omega_{24}^3=(\lambda_2-\lambda_4)\omega_{34}^2.
\end{eqnarray}
Eliminating $\omega_{24}^3$, $\omega_{34}^2$ and $\omega_{43}^2$
from (3.43-3.45) by using (3.46), (3.47), (3.11) and (3.3), we
obtain
\begin{eqnarray}
&&\omega_{22}^1\omega_{33}^1+\omega_{22}^1\omega_{44}^1+\omega_{33}^1\omega_{44}^1=-12H^2+3c-\frac{1}{2}R,\\
&&\lambda_3\omega_{22}^1\omega_{44}^1+\lambda_2\omega_{33}^1\omega_{44}^1+\lambda_4\omega_{22}^1\omega_{33}^1=-6cH-3\lambda_2\lambda_3\lambda_4.
\end{eqnarray}
By (3.7) and the second expression of (3.6), we rewrite (3.17-3.19),
respectively, as follows:
\begin{eqnarray}
\quad e_1e_1(\lambda_2)+\omega_{22}^1e_1(\lambda_1)+2(\lambda_1-\lambda_2)(\omega_{22}^1)^2+(\lambda_1-\lambda_2)(\lambda_1\lambda_2+c)=0,\\
\quad e_1e_1(\lambda_3)+\omega_{33}^1e_1(\lambda_1)+2(\lambda_1-\lambda_3)(\omega_{33}^1)^2+(\lambda_1-\lambda_3)(\lambda_1\lambda_3+c)=0,\\
\quad
 e_1e_1(\lambda_4)+\omega_{44}^1e_1(\lambda_1)+2(\lambda_1-\lambda_4)(\omega_{44}^1)^2+(\lambda_1-\lambda_4)(\lambda_1\lambda_4+c)=0.
\end{eqnarray}
Since $\lambda_1=-2H$, eliminating $e_1e_1(H)$ from (3.27) and
(3.50-3.52), by (3.30), (3.11) and (3.3) we have
\begin{eqnarray}
4(\omega_{22}^1+\omega_{33}^1+\omega_{44}^1)e_1(H)+48H^3-66cH+9RH-3\lambda_2\lambda_3\lambda_4=0.
\end{eqnarray}
It follows from (3.53) that (3.27) reduces to
\begin{eqnarray}
4e_1e_1(H)-16H^3-98cH+13RH-3\lambda_2\lambda_3\lambda_4=0.
\end{eqnarray}
Now, by the fact $\lambda_1=-2H$ and (3.11), (3.40) becomes
\begin{eqnarray}
(\lambda_2^2-4H^2)\omega_{22}^1+(\lambda_3^2-4H^2)\omega_{33}^1+(\lambda_4^2-4H^2)\omega_{44}^1=0.
\end{eqnarray}
From (3.3) and (3.11), we have that
\begin{eqnarray}
\lambda_3\lambda_4=\frac{1}{2}R-6c+12H^2-6H\lambda_2+\lambda_2^2,\\
\lambda_2\lambda_4=\frac{1}{2}R-6c+12H^2-6H\lambda_3+\lambda_3^2,\\
\lambda_3\lambda_4=\frac{1}{2}R-6c+12H^2-6H\lambda_4+\lambda_4^2.
\end{eqnarray}
Hence, from (3.55-3.58), (3.7), (3.30) we get
\begin{eqnarray}
e_1(\lambda_2\lambda_3\lambda_4)=-(\lambda_1-\lambda_2)\lambda_3\lambda_4\omega_{22}^1
-(\lambda_1-\lambda_3)\lambda_2\lambda_4\omega_{33}^1-(\lambda_1-\lambda_4)\lambda_2\lambda_3\omega_{44}^1\nonumber\\
=(56H^3+RH-12cH+\lambda_2\lambda_3\lambda_4)(\omega_{22}^1+\omega_{33}^1+\omega_{44}^1)-72H^2e_1(H).
\end{eqnarray}

Differentiating (3.53) along $e_1$, by using (3.17-3.19), (3.54),
(3.53) and (3.59) we obtain that
\begin{eqnarray}
(200H^3+25RH-200cH-3\lambda_2\lambda_3\lambda_4)(\omega_{22}^1+\omega_{33}^1+\omega_{44}^1)\\
=(160H^2+13R-78c)e_1(H).\nonumber
\end{eqnarray}
Combining (3.60) with (3.53) gives
\begin{eqnarray}
4\big(e_1(H)\big)^2(160H^2+13R-78c) =-(48H^3-66cH+9RH
\\-3\lambda_2\lambda_3\lambda_4)(200H^3+25RH-200cH-3\lambda_2\lambda_3\lambda_4).\nonumber
\end{eqnarray}
Now differentiating (3.61) along $e_1$, using (3.54), (3.59),
(3.60), (3.61) we have an algebraic equation concerning $H$ and
$\lambda_2\lambda_3\lambda_4$ with constant coefficients
\begin{eqnarray}
&&2040217600H^{10}+(659304960R-4882549760c)H^8\\
&&+(3730891264c^2-1021023488cR+69428224R^2)H^6\nonumber\\
&&+(-987669696c^3-55470688cR^2+407658368c^2R+2493816R^3)H^4\nonumber\\
&&+(115086816c^4-55092024c^3R+9593272c^2R^2
-716326cR^3 +19162R^4)H^2\nonumber\\
&&-74403840H^7K+(105242112c-15432192R)H^5K\nonumber\\
&&+(-927984R^2
+12200976cR-38310432c^2)H^3K\nonumber\\
&&+(11289096c^3-4544436c^2R +602004cR^2-26364R^3)HK\nonumber\\
&&+403200H^4K^2 +(133488c+16632R)H^2K^2+8640HK^3\nonumber\\
&&+(186732c^2-54990Rc +3978R^2)K^2=0,\nonumber
\end{eqnarray}
where $K=\lambda_2\lambda_3\lambda_4$.

If $K$ is a constant, then (3.62) reduces to an algebraic equation
of $H$ with constant coefficients. Thus, the real function $H$
satisfies a polynomial equation $q(H)=0$ with constant coefficients,
therefore it must be a constant. We get a contradiction.

Assume that $K$ is not constant. Consider an integral curve of $e_1$
passing through $p=\gamma(t_0)$ as $\gamma(t), t\in I$. According to
Lemma 3.2, we can assume $t=t(K)$ and $H=H(K)$ in some neighborhood
of $K_0=K(t_0)$.

Note that
\begin{eqnarray}
\frac{dH}{d K}=\frac{dH}{dt}\frac{dt}{dK}=\frac{e_1(H)}{e_1(K)}.
\end{eqnarray}
In fact, equations (3.59) and (3.60) could yield
\begin{eqnarray}
\quad
\frac{e_1(K)}{e_1(H)}=\frac{(56H^3+RH-12cH+K)(160H^2+13R-78c)}{(200H^3+25RH-200cH-3K)}-72H^2.
\end{eqnarray}

Differentiating (3.62) with respect to $K$ and substituting
$\frac{dH}{dK}$ from (3.63) and (3.64), we get another independent
algebraic equation of  $H$ and $K$
\begin{eqnarray}
\sum_{i=0}^4q_i(H)K^i=0.
\end{eqnarray}
where $q_i(H)$ is a polynomial concerning function $H$.

We may eliminate $K^4$, $K^3$, $K^2$ and $K$ from equations (3.62)
and (3.65) gradually. At last, we obtain a non-trivial algebraic
polynomial equation of $H$ with constant coefficients. Therefore, we
conclude that the real function $H$ must be a constant, which
contradicts to our original assumption. This completes the proof of
Theorem 3.3.
\end{proof}
As a corollary, we immediately get the following characterization
theorem.
\begin{theorem}
Every biharmonic hypersurface with constant scalar curvature in the
5-dimensional sphere $\mathbb S^5$ has constant mean curvature.
\end{theorem}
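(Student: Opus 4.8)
The plan is to obtain this as a direct consequence of Theorem 3.3, once we observe that $\mathbb{S}^5$ is itself a space form of the type covered there. The unit sphere $\mathbb{S}^5$ has constant sectional curvature $1$, so it is exactly $\mathbb{E}^5(c)$ with $c=1$. Thus any biharmonic hypersurface with constant scalar curvature in $\mathbb{S}^5$ is, verbatim, a biharmonic hypersurface with constant scalar curvature in $\mathbb{E}^5(1)$, and the whole machinery developed in Section 3 becomes available.

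The one point to address is that Theorem 3.4 concerns \emph{all} biharmonic hypersurfaces, whereas Theorem 3.3 is stated only for \emph{proper} biharmonic ones. I would therefore split into two cases according to whether $M$ is minimal. If $M$ is minimal, then by definition $H\equiv 0$, which is trivially constant, and there is nothing further to prove. If $M$ is not minimal, then it is proper biharmonic (biharmonic but not harmonic), and Theorem 3.3 applied with $c=1$ immediately yields that $M$ has constant mean curvature. Since these two cases are exhaustive, the conclusion follows in full generality.

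I do not expect any genuine obstacle in this argument: all of the substantive work—the reduction of the shape operator via the second equation of (2.6), the connection-coefficient identities of Lemma 3.1, the vanishing established in Lemma 3.2, and the elimination procedure producing the algebraic constraint on $H$—has already been carried out in establishing Theorem 3.3. The corollary adds only the elementary observation that the minimal case must be separated out by hand, together with the identification of the positively curved model $\mathbb{S}^5$ as the instance $c=1$ of the hypothesis $\mathbb{E}^5(c)$. The closest thing to a subtlety is precisely this proper-versus-general distinction, but it is dispatched in a single line.
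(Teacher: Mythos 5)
Your proposal is correct and matches the paper exactly: the paper derives this statement as an immediate corollary of Theorem 3.3, with $\mathbb{S}^5 = \mathbb{E}^5(1)$, and the only content beyond that theorem is the trivial observation that minimal hypersurfaces have $H \equiv 0$, which is the case split you make explicit.
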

\begin{remark}
Since all the known examples of proper biharmonic submanifolds in
$\mathbb S^n$ have constant mean curvature, Balmus-Montaldo-Oniciuc
in \cite{BMO2008} conjectured that {\em the proper biharmonic
hypersurfaces in $\mathbb S^{n+1}$ must have constant mean
curvature}. Hence, Theorem 3.4 gives an affirmative partial answer
to this conjecture.
\end{remark}
Consider the cases $c=0, -1$ in the calculation above. From the
first equation of (2.6), one can easily obtain
\begin{theorem}
There exist no proper biharmonic hypersurfaces with constant scalar
curvature in the 5-dimensional Euclidean space $\mathbb E^5$ or
hyperbolic space $\mathbb H^5$.
\end{theorem}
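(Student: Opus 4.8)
The plan is to deduce Theorem 3.6 directly from Theorem 3.3 together with the first biharmonic equation in (2.6), specialized to the hypersurface case $n=4$ and to the values $c\in\{0,-1\}$. The key observation is that, once the mean curvature has been shown to be constant, the surviving biharmonic equation degenerates into a purely algebraic constraint on the squared norm of the second fundamental form, and the sign of that constraint cannot be reconciled with $c\le 0$ unless $H$ vanishes.

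First I would invoke Theorem 3.3: a proper biharmonic hypersurface with constant scalar curvature in $\mathbb E^{5}(c)$ has constant mean curvature $H$. Hence $\Delta H=0$, and the first equation of (2.6) with $n=4$ collapses to the algebraic identity
\begin{eqnarray*}
H\,({\rm trace}\,A^2-4c)=0.
\end{eqnarray*}
Since the immersion is assumed proper biharmonic, $H$ is not identically zero, so on the open set where $H\neq 0$ we obtain ${\rm trace}\,A^2=4c$, that is $B=4c$ in the notation of (3.2).

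Next I would invoke the positivity of $B$. From (3.2) together with $\lambda_1=-2H$ one has $B=4H^2+\lambda_2^2+\lambda_3^2+\lambda_4^2\ge 4H^2>0$ wherever $H\neq 0$. For $c=0$ the relation $B=4c=0$ would force $H=0$, a contradiction; for $c=-1$ the relation $B=4c=-4<0$ contradicts $B\ge 0$ outright. In either case no point with $H\neq 0$ can exist, so $H\equiv 0$ on $M$ and the hypersurface is minimal, contrary to the proper biharmonic hypothesis. This yields the asserted non-existence.

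I do not expect a genuine obstacle here: all of the analytic difficulty has already been absorbed into Theorem 3.3, and what remains is essentially a one-line sign argument. The only mild care required is to handle the possibility that $H$ vanishes on part of $M$; one simply works on the open set $\{H\neq 0\}$, shows it must be empty by the above, and concludes minimality globally, which contradicts properness.
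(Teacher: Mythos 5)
Your argument is essentially the paper's own proof: the paper disposes of Theorem 3.6 in one line by combining Theorem 3.3 with the first equation of (2.6) for $c=0,-1$, and your write-up fills in exactly that sign argument, including the correct handling of the set $\{H\neq 0\}$. One step, however, is not justified as written: you derive the lower bound $B\ge 4H^2$ from (3.2) ``together with $\lambda_1=-2H$,'' but the normalization $\lambda_1=-2H$ and the whole frame of Section 3 were set up under the standing assumption that $H$ is \emph{not} constant (it comes from ${\rm grad}\,H\neq 0$ being an eigenvector of $A$ via the second equation of (2.6)); once Theorem 3.3 has forced $H$ to be constant, ${\rm grad}\,H=0$ and that relation is no longer available, so citing (3.2) at this point is circular. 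The repair is immediate and does not change your conclusion: since $A$ is symmetric, ${\rm trace}\,A^2=\sum_{i=1}^4\lambda_i^2\ge \tfrac{1}{4}\big({\rm trace}\,A\big)^2=4H^2$ by Cauchy--Schwarz (or, even more simply, ${\rm trace}\,A^2\ge 0$ with equality only if $A=0$, which forces $H=0$). With that substitution, your two cases read: for $c=0$, ${\rm trace}\,A^2=0$ gives $A=0$, hence $H=0$; for $c=-1$, ${\rm trace}\,A^2=-4<0$ is impossible; either way $\{H\neq0\}$ is empty, contradicting properness.
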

\begin{remark}
Theorem 3.6 gives affirmative partial answers to Chen's conjecture
and Generalized Chen's conjecture.
\end{remark}
We end this paper with a further remark.
\begin{remark}
Replace the condition {\em constant scalar curvature} by {\em
constant length of the second fundamental form} in Theorem 3.3. In
view of expressions (3.2) and (3.3), with quite similar argument as
above we could obtain similar conclusions.
\end{remark}


\end{document}